\titleformat{\subsection}{\it}{\thesubsection.\enspace}{1pt}{}
\def\ps@pprintTitle{%
   \let\@oddhead\@empty
   \let\@evenhead\@empty
   \let\@oddfoot\@empty
   \let\@evenfoot\@oddfoot
}
\newtheorem {theorem}{Theorem}[section]
\newtheorem {lemma}[theorem]{Lemma}
\newtheorem {remark}[theorem]{Remark}
\newtheorem {definition}[theorem]{Definition}
\newcommand{\di}{\mbox{div}}
\newcommand{\R}{\mathbb{R}}
\newcommand{\p}{\partial}
\def\u{\mathbf{u}}
\def\w{\mathbf{w}}
\def\O{\Omega}
\begin{document}
\begin{frontmatter}
\title{Non-existence of global classical solutions to barotropic compressible Navier-Stokes equations with degenerate viscosity and vacuum}
\author[add1]{Minling Li}\ead{limling3@mail2.sysu.edu.cn}
\author[add1]{Zheng-an Yao}\ead{mcsyao@mail.sysu.edu.cn}
\author[add1]{Rongfeng Yu\corref{cor1}}\ead{yurongf@mail.sysu.edu.cn}\cortext[cor1]{Corresponding author}
\address[add1]{School of Mathematics, Sun Yat-sen University, Guangzhou, 510275, CHINA}

\begin{abstract}
We are concerned about the barotropic compressible Navier-Stokes equations with density-dependent viscosities which may degenerate in vacuum. We show that any classical solution to barotropic compressible Navier-Stokes equations in bounded domains will blow up, when the initial density admits an isolated mass group and the viscousity coefficients satisfy some conditions. A new condition on viscosities is first put forward in this paper.
\end{abstract}

\begin{keyword}
Compressible Navier-Stokes equations; Blow up; Degenerate viscosity; Vacuum.
\end{keyword}

\end{frontmatter}
\vspace*{10pt}

\section{Introduction}
This paper is concerned with the barotropic compressible Navier-Stokes system:  
\begin{equation} \label{eq1}%\tag{NS}
\left\{\begin{array}{lr} 
\rho_t +\di(\rho \u)=0,\\
(\rho \u)_t+\di(\rho \u \otimes \u)-\di(2\mu(\rho)D(\u))-\nabla (\lambda(\rho)\di \u)+ \nabla P= 0,
\end{array}\right.
\end{equation}
with the initial data
\begin{equation}\label{initial data}
\rho(x,0)=\rho_{0}(x), \u(x,0)=\u_{0}(x).
\end{equation}
Here $x\in\R^d (d\geq 2)$ is the spatial coordinate, $t\geq 0$ is time. $\rho \geq 0$, $\u$ and $P$ denote the fluid density, velocity and pressure, respectively. $D(\u)$ is the strain tensor with the form $$D(\u)=\dfrac{(\nabla \u+\nabla \u^\top)}{2}.$$
$\mu(\rho)$ and $\lambda(\rho)$ are the viscosity coefficients satisfing the physical restrictions:
\begin{equation}\label{viscostiy0}
\mu(\rho)\geq 0,\; 2\mu(\rho)+d\lambda(\rho)\geq 0.
\end{equation}
We consider the polytropic gases for which the equation of state is given by 
\begin{equation}\label{pressure}
P(\rho)=A\rho^{\gamma},
\end{equation}
where $A$ is a positive constant, setting to be unity for convenience, $\gamma>1$ is the adiabatic exponent. 

A great number of mathematicians have made great efforts and achieved fruitful results for the local and global existence of solutions to compressible Navier-Stokes equations. The one-dimensional problem has been studied extensively, see \cite{2011Ding,Liliang2016,li2008vanishing,Qinyao2010,yang2002compressible,Yangyao2001} and references cited therein. In multi-dimensional case, Matsumura and Nishida proved the global existence of classical solutions for compressible Navier-Stokes equations in \cite{matsumura1983initial-boundary}, where they required that the initial data close to a equilibrium state far away from vacuum. When considering general data, one has to face the possible appearance of vacuum. However, as observed in \cite{xin1998blowup,xin2013on}, the smooth solutions to the full compressible Navier-Stokes equations with constant viscosities will blow up in finite time. Some other related results can be found
in \cite{bian2019finite,cho2006blow-up,2019liwang,rozanova2008blow} and the references therein. When it comes to barotropic compressible Navier-Stokes equations with constant viscosity coefficients, Lions  made a breakthrough in \cite{Lion}, where he proved the global existence of weak solutions for any initial data containing vacuum, as soon as the initial energy was finite and $\gamma\geq \gamma_n (\gamma_n=\dfrac{3n}{n+2}, n=2,3)$. Jiang and Zhang extended Lion's result to $\gamma>1$ for spherically symmetric initial data in \cite{jiang2001on}.  Feireisl, Novotn\'y and Petzeltova \cite{feireisl2001on} improved the condition to $\gamma >3/2$ in three-dimensional space. Huang, Li and Xin showed the global existence of classical solutions to three-dimensional barotropic compressible Navier-Stokes equations for initial data with small total energy but possible large oscillations and containing vacuum states in \cite{huang2012global}. Later, Li and Xin \cite{li2019global} also proved the global existence in two-dimensional space.

For the case that the viscosity coefficients depend on the density and degenerate at the vacuum, there are more interesting phenomena. Such model was derived from the Boltzmann equations by Liu, Xin and Yang in \cite{liu1998}.  Vaigant and Kazhikhov proved that there exists a unique classical solution for two-dimensional barotropic compressible Navier-Stokes equations with $\mu=const.$, $\lambda=\rho ^\beta (\beta>3)$ in periodic domains when the initial density away from vacuum in \cite{1995Kazhikhov}. Later, Huang and Li \cite{2016Huang} established the global existence of classical solutions to Cauchy problem of this model for $\beta>4/3$ with vacuum at far field, while Jiu, Wang and Xin \cite{jiu2014global,jiu2018} obtaining the similar result with non-vacuum at far field. 

When the viscosity coefficients are both dependent of density, Bresch and Desjardins \cite{bresch2003existence} proposed a new entropy inequality (BD-entropy) under an additional constraint on the viscosity coefficients, which played an important role in proving the existence of weak solutions. Based on this conclusion, there are some results about weak solution for compressible Navier-Stokes equations. Bresch, Desjardins and G\'{e}rard-Varet showed the existence of global weak solutions for compressible Navier-Stokes equations with an additional appropriate constraints on the initial density profile and domain curvature in \cite{bresch2007on}. Guo, Jiu and Xin proved the global existence of weak solutions for the spherically symmetric initial data in \cite{guo2008spherically}. Li and Xin proved the global existence of weak solutions for two-dimensional and three-dimensional Cauchy problem of barotropic compressible Navier-Stokes equations in \cite{jing2015global}. In the same time, Vasseur and Yu gave the proof of the global existence of weak solutions for three-dimensional Navier-Stokes equations by using a different method in \cite{vasseur2016existence}. A nature question is: Can we improve the regularity of the weak solutions above? Li, Pan and Zhu investigated the local existence of regular solutions for compressible barotropic Navier-Stokes equations with density-dependent viscosities in \cite{Li2016Recent,2015On}. Luo and Zhou extended the result in \cite{2019luozhou}. Recently, Xin and Zhu \cite{xin2019global} proved the global-in-time well-posedness of regular solutions for a class of smooth initial data for Cauchy problem. So it is important to investigate the classical solutions for multi-dimensional compressible Navier-Stokes equations with degenerate viscosities whether exist globally.

In this paper, we consider the barotropic compressible Navier-Stokes equations in a bounded smooth domain $\Omega\subset\R^d$ and come to a conclusion that the classical solutions to the initial boundary value problem with density-dependent viscosities satisfying some conditions (i.e. Condition (i) -- (iii) in Section 2) will blow up in finite time if the initial density admits an isolated mass group. The key step of the proof is to handle the term
\begin{equation}\label{term}
\int_{0}^{t}\int_{U{(s)}} (2\mu (\rho)+d\lambda(\rho))dxds. 
\end{equation}
In the case that the viscosity coefficients satisfy Condition (i) or (ii) in Section 2, it is simple to deduce that
\[\int_{0}^{t}\int_{U{(s)}} (2\mu (\rho)+d\lambda(\rho))dxds \leq C, \]
which is inspired by \cite{duan2019finite-time}. In the case that the viscosity coefficients satisfy Condition (iii), we could split $U(t)$ into two parts, i.e. $U(t)=\mathcal{C}(t)\cup U_1(t),$ with $\mathcal{C}(t):=\{x\in U(t)|0\leq\rho(x,t)\leq 1\}$ and $U_1(t):=U(t)\backslash \mathcal{C}(t)$. In $\mathcal{C}(t)$, $\rho$ is bounded, and it is easy to get
\[\int_{0}^{t}\int_{\mathcal{C}{(s)}} (2\mu (\rho)+d\lambda(\rho))dxds \leq Ct. \]
Since $\mu'(\rho)\geq c$ for some positive constant $c$ in $U_1(t)$, we could get higher regularity on the density and then obtain by using interpolation inequality and Sobolev embedding that
\[\int_{0}^{t}\int_{U_1{(s)}} (2\mu (\rho)+d\lambda(\rho))dxds \leq C(t+1), \]
which will be proved carefully in Lemmas \ref{sec} and \ref{kk}. The third case is our main innovation in this paper. Then we denote the quantity $$G(t):=\int_{U{(t)}}\rho|x|^2 dx$$ and conclude by a series of calculations that
\[G(t)\geq G_{0}+G_{1}t+C M_{0}^{\gamma}t^{2}-C(t+1)^{3/2}.\]
On the other hand, from the fact that $U(t)\subset \Omega$ is bounded and the mass equation $\eqref{eq1}_1$ , we have
$$
\begin{aligned}
G(t)\leq M_{0}R^{2},~~~\text{for some}~~R>0.
\end{aligned}
$$
Therefore, we have that $t$ must be finite.

\section{Main Results}
Before presenting our main results, we need to give the following two definitions. First, we define the classical solutions to the initial value boundary problem for System \eqref{eq1}--\eqref{initial data} with suitable boundary conditions.
\begin{definition}
For $T>0$, a pair $(\rho(x,t),\u(x,t)) $ is called a classical solution to the initial value boundary problem for System \eqref{eq1}--\eqref{initial data} with a suitable boundary condition on $\p\O$ if the nonnegative function $\rho \in C^{1}(\O \times [0,T))$, and the vector field $\u \in C^{1}([0,T);C^{2}(\O))$ satisfy the system \eqref{eq1} point-wisely on $\O \times [0,T)$, take on the initial condition \eqref{initial data} continuously, and satisfy the boundary condition continuously.
 %is called a classical solution of problem \eqref{eq1}  for some positive $T$ provided \eqref{eq1} holds for $(x,t) \in \T^d \times [0,T)$ a.e..
\end{definition}

Next, we will give the definition of the isolated mass group which is introduced in \cite{xin2013on}. 

\begin{definition}
Let $V$, $U$ be two bounded open subsets of $\O$, and $V\subset U$. The pair $(U,V)$ is called an isolated mass group of initial density $\rho_{0}(x)$ if it holds that
\begin{equation*}
\begin{cases}
V\subset\overline{V}\subset U\subset \overline{U}\subset\O,~~U ~\mbox{is connected},\\
\rho_0(x)=0,~~x\in U\setminus V,\\
\int_{V}\rho_0(x)\mbox{d}x>0.
\end{cases}
\end{equation*}
% provided $U$ is connected and  $(U,V)$ satisfy
%	\begin{equation}
%	V \subset \overline{V} \subset U,\; \rho_{0}(x)=0\; \rm{in}\; U \backslash V,\; \rm{and}\; \rho _{0}(x)\; \rm{is \;not\; identically\; equal\; to\; zero\; on}\; V.
%	\end{equation}
\end{definition}
Let  $(U,V)$ be an isolated mass group of $\rho_{0}(x)$ in $\O$. 
Set $$
\begin{aligned}
&M_{0}=\int_{V}{\rho_{0}(x)dx},\\
&G_{0}=\int_{V}{\rho_{0}(x)|x|^{2}dx},\\
&G_{1}=2\int_{V}{\rho_{0}(x)\u_{0}(x) \cdot x dx},\\
&E_{0}=\int_{V}{\Big(\frac{1}{2}\rho_{0}(x)|\u_{0}(x)|^{2}+\frac{1}{\gamma-1}\rho_{0}^{\gamma}(x)\Big)}dx,\\
&E_{1}=\int_{V}{\Big(\frac{1}{2}\rho_{0}|\u+2\mu '(\rho_0)\nabla \log \rho_{0}|^{2}+\frac{1}{\gamma-1}\rho_{0}^{\gamma}\Big)dx}.
\end{aligned}
$$
\textbf{Conditions on $\mu(\rho)$ and $ \lambda(\rho)$:}\\
We assume that the viscosity coefficients $\mu(\rho)$ and $ \lambda(\rho)$ have the form 
\begin{equation}\label{viscostiy}
\mu(\rho)=a\rho^{\alpha},~~ \lambda(\rho)=b\rho^{\beta},~\text{with}~ a>0, 2a+db\geq 0,
\end{equation}
and satisfy one of the following conditions: 
\begin{itemize}
\item [(i)]  $\alpha=\beta\in (0,+\infty),~2a+db=0;$
\item [(ii)] $\alpha,\beta\in (0,\gamma];$
\item [(iii)] $\alpha=\beta\in (\gamma,\kappa\gamma], ~2a(\alpha-1)=b,$ with $\kappa=\left\{\begin{array}{lr} 
{\rm{for\; any}}\; \kappa \in [1,2),\;{\rm{when}}\; d=2,\\
\frac{d+2}{d},\;~~~~~~~~~~~~~~~~~{\rm{when}}\; d\geq3.\\
\end{array}\right.$
\end{itemize}

\begin{remark}
In general, consider the two finite linear combinations $\sum a_i \rho^{\alpha_i}$ and $\sum b_i \rho^{\beta_i},$ if $\alpha_i=\beta_i \in (0,+\infty),~2a_i+db_i=0$, then they are admissible functions for $\mu(\rho),\lambda(\rho)$; if $\alpha_i,\beta_i\in (0,\gamma],$ then they are still admissible functions for $\mu(\rho),\lambda(\rho)$; similarly, if $\alpha_i=\beta_i\in (\gamma,\kappa\gamma], ~2a_i(\alpha_i-1)=b_i,$ the finite linear combinations are also admissible.
\end{remark}

%Here $C$ is any positive constant independent of $\rho$. 

%\begin{equation}\label{condition2}
%\mu(0)=\lambda(0)=0,\;\mu(\rho),\; \lambda(\rho) \leq C(1+\rho^{\gamma});
%\end{equation}
%or
%\begin{equation}\label{condition3}
%\begin{aligned}
%&\lambda(\rho)=2 \rho \mu'(\rho)-2\mu(\rho),\; \mu(0)=0,\;\mu'(\rho)\geq \epsilon_0,\;\epsilon_0 \in %(0,1), \\
%&\mu(\rho) \leq\left\{\begin{array}{lr} 
%C(1+\rho^{r \gamma}),\;{\rm{for\; all}}\; r \in [1,2),\;{\rm{when}}\; d=2,\\
%C(1+\rho ^{\frac{d+2}{d}\gamma}),\;{\rm{when}}\; d\geq3.\\
%\end{array}\right.
%\end{aligned}
%\end{equation}
%\;\mu(\rho)=C\rho^{\alpha}~\text{with} \\
%\gamma\leq\alpha\leq\left\{\begin{array}{lr} 
%r \gamma,\;{\rm{for\; all}}\; r \in [1,2),\;{\rm{when}}\; d=2,\\
%\frac{d+2}{d}\gamma,\;{\rm{when}}\; d\geq3.\\
%\end{array}\right.$
%When the viscosity coefficients satisfy Condition (i) above, we deduce the conclusion, which is inspired by \cite{duan2019finite-time}. And when the viscosity coefficients satisfy \eqref{condition3}, using the BD-entropy, we draw the conclusion by improving the regularity of density. This case is our main contribution and innovation in this article. 
The main result is described as follows:
\begin{theorem}\label{them1}
	Let $(\rho(x,t),\u(x,t))$ be a classical solution to the compressible Navier-Stokes system $\eqref{eq1}$ on $\O \times [0,T)$ with initial data \eqref{initial data}, supplied with a suitable condition on $\p\O$. Suppose that the initial density $\rho_{0}(x)$ admits an isolated mass group $(U,V)$, and $\mu(\rho),\lambda(\rho)$ satisfy \eqref{viscostiy0} and \eqref{viscostiy}, if one of the following conditions holds:
	
	(1) Condition (i) holds, and $M_0$ is finite;

	(2) Condition (ii) holds, and $M_0$, $E_0$ are finite;
	
	(3) Condition (iii) holds, and $M_0$, $E_0$, $E_1$ are finite.
		
	Then the classical solution $(\rho(x,t),\u(x,t))$ will blow up in finite time.
\end{theorem}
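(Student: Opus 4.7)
The plan is to implement the virial-type argument outlined in the introduction. Let $X(t,y)$ denote the particle trajectory of the velocity field $\u$ starting from $y\in\Omega$, and set $U(t):=X(t,U)$, $V(t):=X(t,V)$. Since $\u\in C^{1}([0,T);C^{2}(\Omega))$, the flow is well defined on the interval of existence; along characteristics, the continuity equation reads $D_{t}\rho=-\rho\,\di\u$, so $\rho\equiv 0$ on $U(t)\setminus V(t)$ and in particular on $\partial U(t)$. Thus the isolated vacuum shell persists, and the mass contained in $U(t)$ is conserved, $\int_{U(t)}\rho\,dx=M_{0}$.

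Differentiating $G(t)=\int_{U(t)}\rho|x|^{2}\,dx$ via Reynolds' transport theorem, and using $\rho|_{\partial U(t)}=0$ to kill boundary terms, gives $\dot G=2\int_{U(t)}\rho\u\cdot x\,dx$. A second differentiation, after substituting the momentum equation and integrating by parts (every boundary integral vanishes because $\mu(\rho),\lambda(\rho),P(\rho)$ are all zero on the vacuum boundary), yields
\begin{equation*}
\tfrac12\ddot G(t)=\int_{U(t)}\rho|\u|^{2}\,dx-\int_{U(t)}(2\mu+d\lambda)\,\di\u\,dx+d\int_{U(t)}P\,dx .
\end{equation*}
H\"older's inequality bounds the pressure term below by $cM_{0}^{\gamma}$ since $|U(t)|\le|\Omega|$ and the mass is $M_{0}$. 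Dropping the nonnegative kinetic contribution and integrating twice in time produces
\begin{equation*}
G(t)\ge G_{0}+G_{1}t+cM_{0}^{\gamma}t^{2}-2\int_{0}^{t}(t-s)\!\int_{U(s)}(2\mu+d\lambda)\,\di\u\,dx\,ds .
\end{equation*}

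The heart of the matter is the viscous double integral. I would first apply the energy identity to obtain the dissipation bound $\int_{0}^{t}\!\int(2\mu|D(\u)|^{2}+\lambda(\di\u)^{2})dx\,ds\le E_{0}$, then use the pointwise inequality $|D(\u)|^{2}\ge(\di\u)^{2}/d$ to get $\int_{0}^{t}\!\int_{U(s)}(2\mu+d\lambda)(\di\u)^{2}dx\,ds\le dE_{0}$. A Cauchy--Schwarz step yields
\begin{equation*}
\left|\int_{0}^{t}\!\int_{U(s)}(2\mu+d\lambda)\,\di\u\,dx\,ds\right|\le\sqrt{dE_{0}}\;I(t)^{1/2},\qquad I(t):=\int_{0}^{t}\!\int_{U(s)}(2\mu+d\lambda)\,dx\,ds ,
\end{equation*}
which reduces everything to controlling $I(t)$. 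In Case (i), $\alpha=\beta$ and $2a+db=0$ make $2\mu+d\lambda\equiv 0$, so $I\equiv 0$. In Case (ii), $\alpha,\beta\le\gamma$ implies $\rho^{\alpha}+\rho^{\beta}\le C(1+\rho^{\gamma})$, and combining with the energy bound on $\int\rho^{\gamma}$ gives $I(t)\le Ct$. Case (iii) is the genuinely new ingredient: here $\alpha=\beta>\gamma$ and energy alone does not control $\rho^{\alpha}$. I would instead invoke the Bresch--Desjardins entropy, which is available precisely because of the algebraic constraint $2a(\alpha-1)=b$ and whose initial value is the hypothesis $E_{1}<\infty$; it provides an additional bound of the form $\int|\nabla\rho^{\alpha/2}|^{2}dx\le C$. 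Splitting $U(t)=\{\rho\le 1\}\cup\{\rho>1\}$ as in the introduction, and then applying the Sobolev embedding and an interpolation inequality on the bounded domain $\Omega$ (which is exactly the reason $\alpha$ must lie in $(\gamma,\kappa\gamma]$ with $\kappa=(d+2)/d$ for $d\ge 3$ and any $\kappa<2$ for $d=2$) yields $I(t)\le C(t+1)$. This estimate, formalised in Lemmas~\ref{sec} and \ref{kk}, is the main technical obstacle.

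Feeding the resulting bound back produces
\begin{equation*}
G(t)\ge G_{0}+G_{1}t+cM_{0}^{\gamma}t^{2}-C(t+1)^{3/2} .
\end{equation*}
On the other hand, since $U(t)\subset\Omega$ is bounded, say inside a ball of radius $R$, and carries mass $M_{0}$, one has $G(t)\le M_{0}R^{2}$. Because $\gamma>1$, the quadratic term eventually dominates, so the two inequalities are incompatible for large $t$, forcing the existence time $T$ to be finite. Finiteness of $G_{0}$ and $G_{1}$ under the stated hypotheses on $M_{0}$, $E_{0}$, $E_{1}$ follows from an elementary Cauchy--Schwarz argument on the bounded set $V$, which closes the plan.
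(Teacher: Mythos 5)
Your proposal follows essentially the same route as the paper's proof: persistence of the isolated mass group along particle paths, the virial functional $G(t)=\int_{U(t)}\rho|x|^{2}\,dx$ with the Jensen/H\"older lower bound on the pressure, the Cauchy--Schwarz reduction of the viscous term to $I(t)=\int_{0}^{t}\int_{U(s)}(2\mu+d\lambda)\,dx\,ds$ via the energy dissipation, and the same three-case treatment of $I(t)$ (identically zero under Condition (i), bounded by $Ct$ via the energy under Condition (ii), and controlled under Condition (iii) by the Bresch--Desjardins entropy together with the splitting $\{\rho\le 1\}\cup\{\rho>1\}$ and Sobolev embedding plus interpolation). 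The one imprecision is the stated output of the BD entropy: it gives the space--time bound $\int_{0}^{t}\int \mu'(\rho)\rho^{\gamma-2}|\nabla\rho|^{2}\,dx\,ds\le E_{1}$, hence an $L^{2}_{t,x}$ control of $\nabla\rho^{\gamma/2}$ on the region $\{\rho>1\}$ where $\mu'(\rho)\ge a\alpha$ (not a spatial bound on $\nabla\rho^{\alpha/2}$), and it is precisely this $\gamma/2$ power that makes the interpolation land at $\rho^{\gamma}\in L^{(d+2)/d}_{t,x}$ and hence cover all $\alpha\le\kappa\gamma$.
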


A few remarks are in order:
\begin{remark}
In \cite{bresch2007on}, Bresch et al. showed how to obtain the existence of global weak solutions for both Dirichlet and Navier boundary conditions on the velocity in bounded smooth domains.	Theorem \ref{them1} holds true for these two classical boundary conditions, too. In fact, our result is valid for any physical boundary condition as soon as there is an isolated mass group of the initial density.
\end{remark} 

\begin{remark}
Condition (i) implies that the viscosities satisfy $2\mu(\rho)+d\lambda(\rho)=0$, which is indicated by the kinetic theory of monatomic gases (see \cite{Lion1,yu}). In fact, Theorem \ref{them1} is still true for monatomic gases with the viscosities vanishing at vacuum, since we do not need to handle the term \eqref{term} in this case.  If we replace \eqref{viscostiy} and Condition (ii) by $\mu(\rho),\lambda(\rho) \lesssim 1+\rho^{\gamma}$ with $\mu(\rho),\lambda(\rho)$ vanishing at vacuum, then Theorem \ref{them1} is also valid, which implies the result in \cite{duan2019finite-time} for two-dimensional case.

Furthermore, we could give a more general condition: Continuous functions $\mu(\rho)$ and $\lambda(\rho)$ vanish at vacuum, and there exist $\mu_i(\rho),\lambda_i(\rho)~(i=1,2)$ such that $\mu(\rho)=\mu_1(\rho)+\mu_2(\rho),~\lambda(\rho)=\lambda_1(\rho)+\lambda_2(\rho)$ with
$\mu_1(\rho),\lambda_1(\rho) \lesssim 1+\rho^{\gamma}$ and $2\mu_2(\rho)+d\lambda_2(\rho)=0.$
\end{remark}

\begin{remark}
Condition (iii) implies that $\lambda(\rho)=2\rho\mu'(\rho)-2\mu(\rho)$, which was always assumed in proving the global existence of weak solutions for compressible barotropic Navier-Stokes equations with degenerate viscosity (see \cite{bresch2007on, guo2008spherically, jing2015global, vasseur2016existence}). In fact, Theorem \ref{them1} is also valid if \eqref{viscostiy} and Conditon (iii) are replaced by the following conditions
	$$\lambda(\rho)=2 \rho \mu'(\rho)-2\mu(\rho),~\mu(\rho)\sim\rho^\alpha ~(\alpha>1)~\rm{when}~ \rho< 1,$$
$$ \mu'(\rho)\geq 0,~\mu'(\rho)\geq\epsilon_0>0~\rm{when}~\rho\geq 1,~~~~~~~~~~~~~~~~~~~~~~$$
$$	\mu(\rho) ,\lambda(\rho)\lesssim\left\{\begin{array}{lr} 
	1+\rho^{r \gamma},\;{\rm{for\; all}}\; r \in [1,2),\;{\rm{when}}\; d=2,\\
	1+\rho ^{\frac{d+2}{d}\gamma},\;{\rm{when}}\; d\geq3.\\
	\end{array}\right.$$
	
In \cite{bresch2007on,mellet2007on}, the assumption that $``\mu'(\rho)\geq \epsilon_0>0,~\text{for all} ~ \rho"$ is crucial in the global existence of weak solutions for compressible barotropic Navier-Stokes equations with degenerate viscosity. However, in this paper, we only assume that $\mu'(\rho)\geq\epsilon_0>0$ for $\rho\geq 1$, and $\mu'(0)=0$ is admissible.
%The assumption that	$``\mu(\rho)\sim\rho^\alpha ~(\alpha>1)~\text{for small}~ \rho"$ in Condition (ii) implies that $\mu'(0)=0.$ However, in \cite{bresch2007on,mellet2007on}, the assumption that $``\mu'(\rho)\geq \epsilon_0>0"$ is crucial in the global existence of weak solutions for compressible barotropic Navier-Stokes equations with degenerate viscosity.
	
\end{remark}

\begin{remark}
	In this paper, there is no assumption on the initial velocity, we only need the initial density to have an isolated mass group in bounded domains. However, this is different from Li, Pan and Zhu's blow-up result \cite{2015On} in whole space, where they need the initial velocity to be a constant on $\p U$ additionally.
\end{remark}

%\begin{remark}
%   The domain we investigate in this paper is bounded, however, Theorem \ref{them1} also holds for any periodic domain containing an isolated mass group in it with additional assumption: 
%\end{remark}

\section{Proof of Theorem \ref{them1}}\label{3}
Let $(\rho(x,t),\u(x,t))$ be a classical solution to the barotropic compressible Navier-Stokes system $\eqref{eq1}$ on $\O \times [0,T)$, where $T$ is the
maximal time of existence. Denote by $X(a,t)$ the particle path starting from $a$ when $t=0$, i.e.
\begin{equation}\label{ODE}
\left\{\begin{array}{lr} 
\frac{d}{dt} X(a,t)=\u(X(a,t),t),\\
X(a,0)=a.
\end{array}\right.
\end{equation}
Set
\begin{equation}\label{Ut}
U(t)=\{X(a,t)|a \in U\},\; V(t)=\{X(a,t)|a \in V\}.
\end{equation}
The pair $(U(t),V(t))$ is an isolated mass group of the density $\rho(x,t)$ in $\O$ at time $t$, and it will not disappear for any $t<T$. In fact, $\u \in C^{1}([0,T);C^{2}(\O))$ ensures that $X(a,t)$ is well-defined (existence and uniqueness) by the classical theory of ordinary differential system. Furthermore, $V\subset\overline{V}\subset U\subset \overline{U}\subset\O$ implies that
\begin{equation*}
V(t) \subset \overline{V(t)} \subset U(t)\subset \overline{U(t)}\subset\O.
\end{equation*}

Since $\rho_{0}(x)=0\; \rm{in}\; U \backslash V$, we can immediately obtain the following lemma from mass conservation equation.
\begin{lemma}
	Suppose $(\rho(x,t),\u(x,t))$ is a classical solution to System \eqref{eq1}--\eqref{initial data} with a suitable boundary condition on $\p\O$, it holds that
	\begin{equation}\label{lem2.2}
	\rho =0\; ~~ \rm{in}\; U(t) \backslash V(t).
	\end{equation}
\end{lemma}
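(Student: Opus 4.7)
The plan is to follow the density along particle paths. Since $\u \in C^{1}([0,T); C^{2}(\O))$, the flow map $X(\cdot,t)\colon \O \to \O$ defined by \eqref{ODE} is a $C^{1}$-diffeomorphism onto its image for each $t \in [0,T)$ by classical ODE theory, and in particular $X(\cdot,t)$ is injective. Consequently
\[
U(t)\setminus V(t)= \{X(a,t)\mid a\in U\}\setminus\{X(a,t)\mid a\in V\}=\{X(a,t)\mid a\in U\setminus V\},
\]
so every point of $U(t)\setminus V(t)$ is the image of some $a\in U\setminus V$ under the flow.

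Next I would rewrite the continuity equation $\eqref{eq1}_{1}$ in Lagrangian form. Expanding $\di(\rho\u)=\u\cdot\nabla\rho+\rho\,\di\u$ gives
\[
\frac{d}{dt}\rho(X(a,t),t)=\rho_{t}+\u\cdot\nabla\rho\bigl|_{(X(a,t),t)} =-\rho(X(a,t),t)\,\di\u(X(a,t),t).
\]
This is a linear first-order ODE in $t$ along each trajectory, so it can be integrated explicitly:
\[
\rho(X(a,t),t)=\rho_{0}(a)\,\exp\!\Bigl(-\int_{0}^{t}\di\u(X(a,s),s)\,ds\Bigr).
\]

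Now for any $a \in U\setminus V$ we have $\rho_{0}(a)=0$ by the definition of an isolated mass group, and the exponential factor is finite because $\di\u$ is continuous on the compact set $\overline{\O}\times[0,t]$ for $t<T$. Therefore $\rho(X(a,t),t)=0$ for every such $a$, which, combined with the identification $U(t)\setminus V(t)=X(U\setminus V,t)$ above, yields \eqref{lem2.2}.

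The only subtle point is justifying that $X(\cdot,t)$ is a bijection so that the set-theoretic identity $U(t)\setminus V(t)=X(U\setminus V,t)$ really holds; this is not a serious obstacle since the regularity of $\u$ is enough to invoke the standard existence/uniqueness and smooth-dependence theorems for ODEs, but it should be stated clearly so that the passage from the pointwise statement ``$\rho_{0}(a)=0 \Rightarrow \rho(X(a,t),t)=0$'' to the set-level conclusion is rigorous.
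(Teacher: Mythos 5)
Your proof is correct and is exactly the argument the paper has in mind: the paper states the lemma ``immediately'' from the mass conservation equation, meaning precisely the Lagrangian representation $\rho(X(a,t),t)=\rho_{0}(a)\exp\bigl(-\int_{0}^{t}\di\u(X(a,s),s)\,ds\bigr)$ together with the identification $U(t)\setminus V(t)=X(U\setminus V,t)$ coming from uniqueness of the flow. You have simply written out the details the authors omit, including the (correct) observation that injectivity of $X(\cdot,t)$ is what makes the set-level conclusion rigorous.
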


Without loss of generality, one can assume that the density $\rho$ and its various derivatives equal to 0 on $\partial U(t)$, since we could otherwise choose $W(t)$ instead of $U(t)$ satisfying $V(t) \subset \overline{V}(t) \subset W(t) \subset \overline{W}(t) \subset U(t)$, then $\rho$ and its various derivatives equal to 0 on $\partial W(t)$. 

\begin{lemma}[Transport formula \cite{Majda}]
	Let $U(t)$ be defined as \eqref{Ut}, for any $f(x,t) \in C^{1}(\mathbb{R}^{d} \times \mathbb{R}^{+})$, we have 
	\begin{equation}\label{lem2.1}
	\frac{d}{dt}\int_{U{(t)}}{f(x,t)dx}=\int_{U(t)}{f_{t}(x,t)dx} +\int_{\partial U(t)}{f(x,t)(\u \cdot \textbf{n})dx},
	\end{equation}
	where $\textbf{n}$ is the unit out normal to $ U(t)$.
	In particular, it holds that
	\begin{equation}\label{lem2.1 k}
	\begin{aligned}
	\frac{d}{dt}\int_{U(t)}{\rho f dx}=\int_{U(t)}{\rho (f_{t}+ \u \cdot \nabla f)dx}.
	\end{aligned}
	\end{equation}
	Here $\rho$ is the density satisfying $\eqref{eq1}_1$.
\end{lemma}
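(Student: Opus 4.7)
The plan is to reduce the moving-domain integral to a fixed-domain one via the Lagrangian map $X(\cdot,t)$ from \eqref{ODE} and then differentiate under the integral sign. First I would introduce the Jacobian $J(a,t):=\det\nabla_a X(a,t)$, which by classical ODE theory is $C^1$, positive on $[0,T)$, and satisfies Liouville's identity
\begin{equation*}
\partial_t J(a,t)=J(a,t)\,(\di\u)(X(a,t),t),\qquad J(a,0)=1.
\end{equation*}
The regularity $\u\in C^1([0,T);C^2(\O))$ guarantees that $X(\cdot,t):U\to U(t)$ is a $C^1$ diffeomorphism, so the pull-back is legitimate.

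The change of variables $x=X(a,t)$ converts $\int_{U(t)}f(x,t)\,dx$ into $\int_U f(X(a,t),t)\,J(a,t)\,da$ over the fixed domain $U$. Since the domain of integration is now time-independent, I would differentiate under the integral sign; the chain rule together with Liouville's identity produces the integrand $[f_t+\u\cdot\nabla f+f\,\di\u](X(a,t),t)\,J(a,t)$. Pulling back to $U(t)$ and recognizing $f_t+\u\cdot\nabla f+f\,\di\u=f_t+\di(f\u)$, the divergence theorem produces the boundary term $\int_{\p U(t)}f(\u\cdot\textbf{n})\,dS$ and establishes \eqref{lem2.1}.

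For the second identity \eqref{lem2.1 k}, I would apply \eqref{lem2.1} with $\rho f$ in place of $f$. The surface integral $\int_{\p U(t)}\rho f(\u\cdot\textbf{n})\,dS$ vanishes because of the convention (established just after Lemma \ref{lem2.2}, if necessary by shrinking $U(t)$ to an inner $W(t)$) that $\rho\equiv 0$ on $\p U(t)$. Using the continuity equation $\rho_t=-\di(\rho\u)$ to expand $(\rho f)_t=\rho f_t-f\,\di(\rho\u)$, and then integrating the divergence term by parts (with another boundary contribution again killed by $\rho|_{\p U(t)}=0$), I arrive at $\int_{U(t)}\rho(f_t+\u\cdot\nabla f)\,dx$, which is \eqref{lem2.1 k}.

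The argument is largely routine; the actual content is Liouville's identity plus the chain rule. The only genuine technical point is the vanishing of the two boundary integrals appearing in the derivation of \eqref{lem2.1 k}, which is handled uniformly by the standing hypothesis $\rho|_{\p U(t)}=0$; without that convention, an additional surface term $\int_{\p U(t)}\rho f(\u\cdot\textbf{n})\,dS$ would persist in \eqref{lem2.1 k}.
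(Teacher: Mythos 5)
The paper offers no proof of this lemma at all --- it is simply quoted from \cite{Majda} --- so there is no argument to compare against line by line. Your Lagrangian derivation of \eqref{lem2.1} (pull back by $X(\cdot,t)$, Liouville's identity $\p_t J=J\,(\di\u)(X(a,t),t)$ with $J(a,0)=1$, differentiation under the now time-independent integral, push-forward and the divergence theorem) is exactly the standard proof and is correct; the regularity $\u\in C^1([0,T);C^2(\O))$ does justify each step.

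One point in your treatment of \eqref{lem2.1 k} is wrong, though it does no damage in the setting of this paper. After applying \eqref{lem2.1} to $\rho f$ and substituting $\rho_t=-\di(\rho\u)$, integrating $-\int_{U(t)}f\,\di(\rho\u)\,dx$ by parts produces the boundary term $-\int_{\p U(t)}\rho f(\u\cdot\textbf{n})\,dS$, which cancels the surface term $+\int_{\p U(t)}\rho f(\u\cdot\textbf{n})\,dS$ coming from \eqref{lem2.1} \emph{identically}, for any density solving $\eqref{eq1}_1$ and any material volume. No boundary condition on $\rho$ is needed, so your closing claim that ``without that convention, an additional surface term would persist in \eqref{lem2.1 k}'' is false: \eqref{lem2.1 k} is unconditional, which is how the lemma states it and how it is used in \cite{Majda}. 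Invoking the standing convention $\rho|_{\p U(t)}=0$ is harmless here, since the paper does adopt it, but it misattributes the reason the boundary terms disappear and suggests the identity is less general than it actually is.
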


The following lemma is about the standard energy estimates for $(\rho,\u)$.
\begin{lemma}\label{lemk}
	Suppose $(\rho(x,t),\u(x,t))$ is a classical solution to System \eqref{eq1}--\eqref{initial data} in $\O \times [0,T)$ with a suitable boundary condition on $\p\O$, then for each $0<t<T$, we have
	\begin{equation}\label{lem2.3 1}
	\int_{U(t)}{\rho(x,t)dx}=M_{0} >0,
	\end{equation}
	and
	\begin{equation}\label{lem2.3 2}
	\begin{aligned}
	\int_{U(t)}{\Big{(}\frac{1}{2}\rho|\u|^{2} +\frac{1}{\gamma-1}\rho^{\gamma}\Big{)}dx} +\int_{0}^{t}\int_{U(s)}{\Big{(}2\mu(\rho)|D\u|^{2}+ \lambda(\rho)|\di {\u}|^{2}\Big{)}dxds}=E_{0}.
	\end{aligned}
	\end{equation}
\end{lemma}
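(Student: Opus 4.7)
The proof is a standard derivation of mass conservation and the energy identity, with the one genuine observation being that the density (and hence the pressure and the viscosity coefficients) vanishes on $\partial U(t)$, so every boundary term produced by integration by parts drops out.

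For the mass identity \eqref{lem2.3 1}, I would simply apply the transport formula \eqref{lem2.1} to $f=\rho$ and then invoke the continuity equation $\eqref{eq1}_1$ together with the divergence theorem:
\[
\frac{d}{dt}\int_{U(t)}\rho\,dx=\int_{U(t)}\rho_t\,dx+\int_{\p U(t)}\rho(\u\cdot\mathbf{n})\,dS=\int_{U(t)}\bigl(\rho_t+\di(\rho\u)\bigr)\,dx=0.
\]
Therefore $\int_{U(t)}\rho\,dx\equiv\int_U\rho_0\,dx=\int_V\rho_0\,dx=M_0$, where the middle equality uses $\rho_0=0$ on $U\setminus V$. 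The strict positivity $M_0>0$ is built into the definition of an isolated mass group.

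For the energy identity \eqref{lem2.3 2}, the plan is to evolve the kinetic and internal parts separately. Taking the dot product of $\eqref{eq1}_2$ with $\u$ and using \eqref{lem2.1 k} in the form
\[
\frac{d}{dt}\int_{U(t)}\tfrac{1}{2}\rho|\u|^2\,dx=\int_{U(t)}\rho\,\u\cdot(\u_t+\u\cdot\nabla\u)\,dx,
\]
I would then integrate by parts on the viscous and pressure terms; the boundary contributions on $\p U(t)$ all carry a factor $\mu(\rho)$, $\lambda(\rho)$, or $P(\rho)$ and vanish by \eqref{lem2.2} together with the remark preceding the lemma that $\rho$ and its derivatives may be taken to be zero on $\p U(t)$. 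This gives
\[
\frac{d}{dt}\int_{U(t)}\tfrac{1}{2}\rho|\u|^2\,dx=-\int_{U(t)}\bigl(2\mu(\rho)|D\u|^2+\lambda(\rho)|\di\u|^2\bigr)dx+\int_{U(t)}P\,\di\u\,dx.
\]
Setting $e(\rho)=\rho^\gamma/(\gamma-1)$ and applying \eqref{lem2.1} to $e(\rho)$, the continuity equation gives $\rho_t+\u\cdot\nabla\rho=-\rho\,\di\u$, and the algebraic identity $\rho e'(\rho)-e(\rho)=\rho^\gamma=P(\rho)$ yields
\[
\frac{d}{dt}\int_{U(t)}e(\rho)\,dx=-\int_{U(t)}\bigl(\rho e'(\rho)-e(\rho)\bigr)\di\u\,dx=-\int_{U(t)}P\,\di\u\,dx.
\]
Adding the two evolutions cancels the $P\,\di\u$ terms, and integrating in $t$ from $0$ to $t$ produces \eqref{lem2.3 2}; the initial value reduces to $E_0$ because $\rho_0\equiv 0$ on $U\setminus V$.

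There is no real obstacle here — the argument is a textbook multiplier calculation. The only thing to be careful about is that every boundary integral on $\p U(t)$ does indeed vanish, which is exactly the content of \eqref{lem2.2} plus the shrinking-$U(t)$ remark; once those are in hand, the identities follow by direct computation using only $\eqref{eq1}$, \eqref{lem2.1}, and \eqref{lem2.1 k}.
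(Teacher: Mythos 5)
Your proposal is correct and follows essentially the same route as the paper: the transport formula for mass conservation, and the multiplier $\u$ on the momentum equation with all boundary terms killed by the vanishing of $\rho$ (hence $\mu$, $\lambda$, $P$) on $\p U(t)$. The only cosmetic difference is that you convert $\int P\,\di\u\,dx$ via the transport formula applied to $e(\rho)=\rho^{\gamma}/(\gamma-1)$, whereas the paper rewrites it using $\rho^{\gamma-1}$ times the continuity equation and \eqref{lem2.1 k}; the two computations are equivalent.
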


\begin{proof}
	It follows from transport formula \eqref{lem2.1} that
	$$
	\begin{aligned}
	\frac{d}{dt}\int_{U(t)}{\rho(x,t)dx}=\int_{U(t)}{\rho_{t}dx}+\int_{\partial U(t)}{\rho \u \cdot \textbf{n} dx}=\int_{U(t)}{(\rho_{t}+ \di(\rho \u))dx}=0,
	\end{aligned}
	$$
	which implies \eqref{lem2.3 1}.
	
	Multiplying $\eqref{eq1}_{2}$ by $\u$, and integrating by part lead to
	\begin{equation}\label{lem2.3 21}
	\begin{aligned}
	\frac{1}{2} \frac{d}{dt} \int_{U(t)}{\rho |\u |^{2}dx}+\int_{U(t)}{\big(2\mu(\rho)|D\u|^{2}+ \lambda(\rho)|\di {\u}|^{2}\big)dx}+\int_{U(t)}{\nabla P \cdot \u dx}=0,
	\end{aligned}
	\end{equation}
	where we used the fact that $\mu(0)=\lambda(0)=0$. From the mass equation $\eqref{eq1}_1$, we have
	\begin{equation}\label{lem2.3 23}
	\begin{aligned}
	\int_{U(t)}{\nabla P \cdot \u dx}=&-\int_{U(t)}{\rho^{\gamma} \di {\u}dx}
	=\int_{U(t)}{\rho^{\gamma-1}(\rho_{t}+ \u \cdot \nabla \rho)dx}\\
	=&\frac{1}{\gamma-1} \int_{U(t)}{\rho \big((\rho^{\gamma-1})_{t}+ \u \cdot \nabla \rho ^{\gamma-1}\big)dx}
	=\frac{1}{\gamma-1} \frac{d}{dt} \int_{U(t)}{\rho^{\gamma}dx}.
	\end{aligned}
	\end{equation}
	Substituting \eqref{lem2.3 23} into \eqref{lem2.3 21} leads to
	\begin{equation}\label{lem2.3 25}
	\begin{aligned}
	\frac{d}{dt}\int_{U(t)}{\Big(\frac{1}{2}\rho |\u|^{2}+\frac{1}{\gamma-1}\rho^{\gamma}\Big)dx } +\int_{U(t)}{\big(2\mu(\rho)|D\u|^{2}+\lambda(\rho)|\di \u|^{2}\big)dx}=0,
	\end{aligned}
	\end{equation}
	which implies \eqref{lem2.3 2}.
\end{proof}

If the viscosity coefficients further meet the condition that $\lambda(\rho)=2\rho \mu'(\rho)-2\mu(\rho)$, then the following further estimates are obtained. We can get the integrability of $\nabla \rho$ with respect to time and space.
\begin{lemma}\label{lemkk}
	Suppose $(\rho(x,t),\u(x,t))$ is a classical solution to System \eqref{eq1}--\eqref{initial data} in $\O \times [0,T)$ with a suitable boundary condition on $\p\O$, if $\mu(\rho)$ and $\lambda(\rho)$ satisfy \eqref{viscostiy0} and Condition (iii), then for each $0<t<T$, we have
	\begin{equation}\label{lem2.4}
	\begin{aligned}
	\frac{d}{dt}\int_{V_0(t)}{\Big{(}\frac{1}{2}\rho |\u +\nabla F(\rho)|^{2}+ \frac{1}{\gamma-1} \rho^{\gamma} \Big{)}dx}&+\int_{V_0(t)}{\nabla F(\rho) \cdot \nabla \rho^{\gamma}dx}\\
	& +\frac{1}{2}\int_{V_0{(t)}}\mu(\rho)|\nabla \u-\nabla \u^\top|^2dx=0,
	\end{aligned}
	\end{equation}
	where $V_0(t)=V(t)\backslash \{x\in V(t)|\rho(x,t)=0\}$ and $F'(\rho)= \frac{2\mu '(\rho)}{\rho}$. Moreover, 
	\begin{equation}\label{lem}
	\begin{aligned}
	\int_{0}^{t}\int_{V_0(s)}\mu '(\rho) \rho^{\gamma-2}|\nabla \rho|^2 dxds \leq E_1.
	\end{aligned}
	\end{equation}
\end{lemma}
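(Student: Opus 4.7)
The proof follows the Bresch--Desjardins (BD) entropy method. Condition (iii), namely $\mu(\rho)=a\rho^\alpha$ and $\lambda(\rho)=b\rho^\alpha$ with $2a(\alpha-1)=b$, is equivalent to the BD constraint $\lambda(\rho)=2\rho\mu'(\rho)-2\mu(\rho)$, which is what I will exploit. Introduce the effective velocity $\w:=\u+\nabla F(\rho)$, noting the algebraic identity $\rho\nabla F(\rho)=2\nabla\mu(\rho)$. Because $\alpha>\gamma>1$ the antiderivative $F$ is well-defined, and restricting to $V_0(t)$ (the subset of $V(t)$ where $\rho>0$) handles the singularity of $F'$ at vacuum; this set is flow-invariant since $\{\rho=0\}$ is, and $\rho$ (hence $\mu(\rho)$ and $\rho^\gamma$) vanishes on $\partial V_0(t)$, so every boundary term in the forthcoming integrations by parts vanishes automatically.

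The concrete plan is to compute $\tfrac{d}{dt}\int_{V_0(t)}\tfrac12\rho|\w|^2\,dx$ via the transport formula \eqref{lem2.1 k}, which reduces the task to $\int_{V_0(t)}\rho\w\cdot D_t\w\,dx$ with $D_t:=\partial_t+\u\cdot\nabla$. Split $\rho D_t\w=\rho D_t\u+\rho D_t\nabla F(\rho)$. For the first piece the momentum equation gives $\rho D_t\u=\di(2\mu D(\u))+\nabla(\lambda\di\u)-\nabla P$. For the second, first derive from $\eqref{eq1}_1$ the transport identity $D_tF(\rho)=-2\mu'(\rho)\di\u$, then apply the commutator $D_t\nabla F=\nabla D_tF-(\nabla\u)^\top\nabla F$ to obtain $\rho D_t\nabla F=-\rho\nabla(2\mu'(\rho)\di\u)-\rho(\nabla\u)^\top\nabla F$. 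The BD constraint $2\rho\mu'=\lambda+2\mu$ rewrites $\rho\nabla(2\mu'(\rho)\di\u)=\nabla((\lambda+2\mu)\di\u)-(\di\u)\,\rho\nabla F$, so after summing the two contributions the $\nabla(\lambda\di\u)$ pieces cancel and one is left with $\rho D_t\w=\di(2\mu D(\u))-\nabla(2\mu\di\u)-\nabla P+(\di\u)\rho\nabla F-\rho(\nabla\u)^\top\nabla F$.

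Pair this identity with $\w$ and integrate over $V_0(t)$. The pressure contribution $-\int\nabla P\cdot\w\,dx$ splits, using the computation \eqref{lem2.3 23} on the $\u$-component, into $-\tfrac{1}{\gamma-1}\tfrac{d}{dt}\int\rho^\gamma\,dx-\int\nabla F(\rho)\cdot\nabla\rho^\gamma\,dx$. On the viscous side, integrating $\di(2\mu D(\u))-\nabla(2\mu\di\u)$ by parts against $\u$ yields $-\int 2\mu|D(\u)|^2\,dx+\int 2\mu|\di\u|^2\,dx$, and the remaining pairings against $\nabla F$ together with the cross terms $\int(\di\u)\rho\nabla F\cdot\w\,dx-\int\rho((\nabla\u)^\top\nabla F)\cdot\w\,dx$ collapse, after repeated use of the BD identity, into exactly $-\tfrac12\int\mu(\rho)|\nabla\u-\nabla\u^\top|^2\,dx$. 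Reassembling gives \eqref{lem2.4}. For the bound \eqref{lem}, integrate \eqref{lem2.4} on $[0,t]$: using $\nabla F(\rho_0)=2\mu'(\rho_0)\nabla\log\rho_0$, the initial value of the first term is exactly $E_1$, and all three integrands on the left are nonnegative, so $\int_0^t\int_{V_0(s)}\nabla F(\rho)\cdot\nabla\rho^\gamma\,dx\,ds\le E_1$. Since $\nabla F(\rho)\cdot\nabla\rho^\gamma=2\gamma\mu'(\rho)\rho^{\gamma-2}|\nabla\rho|^2$ and $2\gamma>1$, the estimate \eqref{lem} follows. The main obstacle is precisely the algebraic collapse just described: tracking how every cross term containing $\u\cdot\nabla F$, $(\nabla\u)^\top\nabla F$, or $\rho\nabla F\cdot\di\u$ cancels under the BD substitution, leaving only the skew-symmetric dissipation, is the technical heart of the argument and requires careful bookkeeping.
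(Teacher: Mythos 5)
Your proposal is correct and follows essentially the same route as the paper: the Bresch--Desjardins entropy computation for the effective velocity $\w=\u+\nabla F(\rho)$, with the $\nabla(\lambda\di\u)$ contribution cancelled via $\lambda=2\rho\mu'-2\mu$ and the time integration of \eqref{lem2.4} yielding \eqref{lem} from $\nabla F(\rho)\cdot\nabla\rho^\gamma=2\gamma\mu'(\rho)\rho^{\gamma-2}|\nabla\rho|^2\geq 0$. The one step you defer as ``careful bookkeeping''---the collapse of the viscous cross terms to $-\tfrac12\int\mu|\nabla\u-\nabla\u^\top|^2$---is exactly where the paper invokes the double integration by parts \eqref{FF}, which is legitimate only because $\mu(\rho)F'(\rho)=2\mu(\rho)\mu'(\rho)/\rho\sim\rho^{2\alpha-2}$ vanishes on $\p V_0(t)$ for $\alpha>1$; this is slightly more delicate than your blanket remark that all boundary terms vanish, since $F'(\rho)$ itself may be singular at vacuum.
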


\begin{proof}
	Multiplying $\eqref{eq1}_1$ by $F'(\rho)$ yields
	\[F_t(\rho)+\u\cdot \nabla F(\rho)+F'(\rho)\rho \di \u=0, \qquad in~  V_0(t). \]
	Operating $\nabla$ to the above equation, and setting $F'(\rho)=\frac{2 \mu '(\rho)}{\rho}$, then we obtain that
	\begin{equation}\label{Feq}
	    \nabla F_t(\rho)+\u \cdot \nabla (\nabla F(\rho))+\nabla \u\cdot \nabla F(\rho)+2\mu'(\rho)\nabla \di \u+2\nabla \mu'(\rho)\di \u=0,\quad in~  V_0(t). 
	\end{equation}
%	Note that $\nabla F(\rho)=2\mu'(\rho)\nabla \log \rho$ and d
	Denote $\w:=\u+\nabla F(\rho)$, we can get from $\eqref{eq1}_2$ and \eqref{Feq} that
	\begin{equation*}
    \begin{split}
	   \rho(\w_t+\u \cdot \nabla\w)+\nabla P&-\di(2\mu(\rho)D(\u))-\nabla (\lambda(\rho)\di \u)\\
	   &+\rho\nabla \u\cdot \nabla F(\rho)+2\rho\mu'(\rho)\nabla \di \u+2\rho\nabla \mu'(\rho)\di \u= 0, \qquad in~ V_0(t).
	\end{split}
	\end{equation*}
	Multiplying the above equality by $\w$ and integrating the resulting equality over $V_0(t)$ lead to 
	\begin{equation}\label{BD}
	\begin{split}
	\frac{1}{2}\frac{d}{dt}\int_{V_0{(t)}}\rho|\w|^2dx&=-\int_{V_0{(t)}}\nabla P\cdot \w dx+ \int_{V_0{(t)}} \nabla (\lambda(\rho)\di \u)\cdot \w dx \\
	&\quad+\int_{V_0{(t)}} \di(2\mu(\rho)D(\u))\cdot \w dx-\int_{V_0{(t)}}\rho \nabla \u\cdot \nabla F(\rho)\cdot \w dx\\
	&\quad-\int_{V_0{(t)}}2\rho\nabla (\mu'(\rho)\di \u)\cdot \w dx:=\sum_{i=1}^{5} I_i.
	\end{split}
	\end{equation}
	First, since $\rho=0$ on $\p V_0(t)$, similar to \eqref{lem2.3 23}, we find that
	\begin{equation*}
	\begin{split}
	   I_1=-\frac{1}{\gamma-1}\frac{d}{dt}\int_{V_0{(t)}} \rho^\gamma dx-\int_{V_0{(t)}} \nabla F(\rho)\cdot \nabla \rho^\gamma dx.
	\end{split}
	\end{equation*}
For $I_3$, for all $1\leq i,j\leq d,$ we have	
	\begin{align}
I_3=&\int_{V_0(t)}\p_j\big[\mu(\rho)(\p_ju^i-\p_iu^j)\big]w^idx+2\int_{V_0(t)}\p_j\big[\mu(\rho)\p_iu^j\big]w^idx\nonumber\\
=&\int_{V_0(t)}\p_j\big[\mu(\rho)(\p_ju^i-\p_iu^j)\big]u^idx+\int_{V_0(t)}\p_j\big[\mu(\rho)(\p_ju^i-\p_iu^j)\big]\p_iF(\rho)dx\nonumber\\
&\quad+2\int_{V_0(t)}\big[\p_j(\mu(\rho)\p_iu^j)-\p_i(\mu(\rho)\p_ju^j)\big]w^idx+2\int_{V_0{(t)}}\p_i\big[\mu(\rho)\p_ju^j\big]w^idx\nonumber\\
=&-\frac{1}{2}\int_{V_0(t)}\mu(\rho)|\p_ju^i-\p_iu^j|^2dx+\int_{V_0(t)}\p_j\big[\mu(\rho)(\p_ju^i-\p_iu^j)\big]\p_iF(\rho)dx\nonumber\\
&\quad+2\int_{V_0(t)}\big[\p_j\mu(\rho)\p_iu^j-\p_i\mu(\rho)\p_ju^j\big]w^idx+2\int_{V_0{(t)}}\p_i\big[\mu(\rho)\p_ju^j\big]w^idx.\label{i2}
\end{align}
Notice that $\mu(\rho)=a\rho^\alpha$ for $\alpha\geq \gamma$, we have $\mu(\rho)F'(\rho)=\dfrac{2\mu(\rho)\mu'(\rho)}{\rho}=0$ on $\p V_0(t)$. Hence by approximation using smooth functions and integrating by parts on $V_0(t)$, we have 
\begin{equation}\label{FF}
\int_{V_0(t)}{\partial _{j}(\mu(\rho)\partial _{j} u ^{i})\partial _{i}F(\rho)dx}
	=\int_{V_0(t)}{\partial _{i}(\mu(\rho)\partial _{j} u ^{i})\partial _{j}F(\rho)dx},
\end{equation}
which combined with \eqref{i2} implies that
	\begin{equation*}
	\begin{split}
	I_3=-\frac{1}{2}\int_{V_0{(t)}}\mu(\rho)|\nabla \u-\nabla \u^\top|^2dx+2\int_{V_0(t)}\big[\nabla \u\cdot \nabla \mu(\rho)-\di \u\nabla\mu(\rho)+\nabla\big(\mu(\rho)\di \u\big)\big]\cdot \w dx
	\end{split}
	\end{equation*}
For $I_4$ and $I_5$, we use some calculations to discover
	\begin{equation*}
	\begin{aligned}
	I_4=&-2\int_{V_0{(t)}}\big[\nabla \u\cdot \nabla \mu(\rho)\big]\cdot \w dx,\\
	I_5=&-2\int_{V_0{(t)}}\nabla\big[\rho \mu'(\rho)\di \u\big]\cdot\w dx+2\int_{V_0{(t)}}\mu'(\rho)\di \u\nabla\rho\cdot\w dx.
	\end{aligned}
	\end{equation*}
	Inserting the above estimates of $I_i(i=1,3,4,5)$ into \eqref{BD}, we deduce that
    \begin{equation}\label{DBD}
    \begin{aligned}
    \frac{d}{dt}&\int_{V_0(t)}{\Big{(}\frac{1}{2}\rho |\u +\nabla F(\rho)|^{2}+ \frac{1}{\gamma-1} \rho^{\gamma} \Big{)}dx}+\int_{V_0(t)}{\nabla F(\rho) \cdot \nabla \rho^{\gamma}dx}\\
   =&-\frac{1}{2}\int_{V_0{(t)}}\mu(\rho)|\nabla \u-\nabla \u^\top|^2dx+\int_{V_0{(t)}} \nabla\big[\big(\lambda(\rho)-2\rho\mu'(\rho)+2\mu(\rho)\big)\di \u\big] \cdot \w dx.
   \end{aligned}
   \end{equation}
	In light of $\lambda (\rho)=2\rho \mu'(\rho)-2\mu(\rho)$, we get \eqref{lem2.4} immediately. Then substituting $F'(\rho)= \frac{2\mu '(\rho)}{\rho}$ into \eqref{lem2.4} and integrating with respect to $t$ lead to \eqref{lem}.
\end{proof}

With Lemma \ref{lemk} and \ref{lemkk} at hand, we are ready to prove the following lemma about the integrability of the viscosity coefficients with respect to time and space, which is motivated by \cite{mellet2007on}.
\begin{lemma}\label{sec}
Suppose $(\rho(x,t),\u(x,t))$ is a classical solution to System \eqref{eq1}--\eqref{initial data} in $\O \times [0,T)$ with a suitable boundary condition on $\p\O$. If $\mu(\rho)$ and $\lambda(\rho)$ satisfy \eqref{viscostiy0} and Condition (iii), then for each $0<t<T$, it holds that
	\begin{equation}\label{nxxs2}
	\begin{aligned}
	\int_{0}^{t}\int_{U(s)}{\big(2\mu(\rho)+d\lambda(\rho)\big) dxds}\leq C(t+1),
	\end{aligned}
	\end{equation}
	where $C$ is a positive constant independent of $t$.
\end{lemma}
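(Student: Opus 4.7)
The plan is to split the spatial integration into the sub-unity and super-unity density regions,
$$\mathcal{C}(s) := \{x \in U(s) : \rho(x,s) \leq 1\}, \qquad U_1(s) := U(s) \setminus \mathcal{C}(s),$$
and treat each part separately as indicated in the introduction. On $\mathcal{C}(s)$, the integrand satisfies $2\mu(\rho) + d\lambda(\rho) = (2a+db)\rho^\alpha \leq 2a+db$, so since $U(s) \subset \O$ has uniformly bounded Lebesgue measure, the contribution over $\mathcal{C}(s)$ is immediately bounded by $Ct$. All the difficulty lies in the bound on $U_1(s)$.

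On $U_1(s)$, the key structural observation is that $\mu'(\rho) = a\alpha\rho^{\alpha-1} \geq a\alpha > 0$, because $\alpha > \gamma > 1$ and $\rho \geq 1$ there. Combined with the identity $|\nabla \rho^{\gamma/2}|^2 = (\gamma/2)^2\rho^{\gamma-2}|\nabla \rho|^2$, Lemma \ref{lemkk} yields
$$\int_0^t \int_{U_1(s)} |\nabla \rho^{\gamma/2}|^2 \, dx \, ds \leq C E_1,$$
while Lemma \ref{lemk} provides the uniform-in-time bound $\int_{U(s)} \rho^\gamma \, dx \leq (\gamma-1)E_0$. The natural cutoff $f := (\rho^{\gamma/2} - 1)_+$, extended by zero outside $U(s)$, satisfies $f^2 \leq \rho^\gamma$ pointwise on $\{\rho\geq 1\}$ and $\nabla f$ is supported in $\{\rho \geq 1\}$ where it agrees with $\nabla \rho^{\gamma/2}$; hence $f \in L^\infty_t L^2(\O) \cap L^2_t H^1_0(\O)$ with norms independent of $t$.

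The $U_1$ bound then reduces to a standard parabolic interpolation. In $d \geq 3$, Gagliardo--Nirenberg gives $\|f\|_{L^p(\O)} \leq C\|\nabla f\|_{L^2}^\theta \|f\|_{L^2}^{1-\theta}$ with $p = 2(d+2)/d = 2\kappa$ and $\theta = d/(d+2)$; since $p\theta = 2$, integrating in time yields $\int_0^t \|f\|_{L^p}^p \, ds \leq C$ uniformly in $t$. In $d = 2$, the hypothesis $\kappa < 2$ makes the required exponent $p = 2\alpha/\gamma < 4$, and the analogous Gagliardo--Nirenberg estimate together with H\"older in time gives $\int_0^t \|f\|_{L^p}^p \, ds \leq C(1+t)^{(4-p)/2} \leq C(1+t)$. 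Since $2\alpha/\gamma \leq 2\kappa = p$, on $U_1(s)$ we have $\rho^\alpha \leq (\rho^{\gamma/2})^p = (f+1)^p \leq C(f^p + 1)$, whose integral over $(0,t) \times \O$ is bounded by $C(1+t)$. Summing with the $\mathcal{C}(s)$ contribution yields \eqref{nxxs2}.

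The main obstacle is the design of the truncation. The $H^1$-type control from Lemma \ref{lemkk} is only available where $\rho$ is bounded away from zero, because $\mu'(\rho)$ degenerates at vacuum; meanwhile, Sobolev embedding requires a function lying in a Sobolev space on a fixed ambient domain. The choice $f = (\rho^{\gamma/2}-1)_+$ resolves both issues simultaneously: it localizes the gradient information to the non-vacuum region $\{\rho\geq 1\}$ and also vanishes near $\p U(s)$, so that its zero extension to $\O$ lives in $H^1_0(\O)$ and the embedding can be applied uniformly in $t$.
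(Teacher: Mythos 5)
Your proof is correct and is in substance the same as the paper's: both split off the region where $\rho\le 1$ (whose contribution is trivially bounded by $Ct$ since $|\O|<\infty$), both use the lower bound $\mu'(\rho)=a\alpha\rho^{\alpha-1}\ge a\alpha>0$ on the high-density set together with \eqref{lem} of Lemma \ref{lemkk} to obtain $\nabla\rho^{\gamma/2}\in L^2_{t,x}$ there, and both combine this with the $L^\infty_t L^1$ bound on $\rho^\gamma$ from Lemma \ref{lemk} via Sobolev/Gagliardo--Nirenberg interpolation to reach $\rho^\gamma\in L^{(d+2)/d}_{t,x}$ for $d\ge3$ (resp.\ $L^r_{t,x}$, $r<2$, for $d=2$). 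The one genuine difference is your truncation $f=(\rho^{\gamma/2}-1)_+$: the paper instead applies the Sobolev embedding to $\rho^{\gamma/2}$ directly on the time-dependent set $V_1(t)$ where $\rho>1$, writing $\|\rho^{\gamma/2}\|_{H^1(V_1(t))}$ and invoking $H^1(V_1(t))\hookrightarrow L^q(V_1(t))$; strictly speaking that step requires a Sobolev constant uniform in $t$, which depends on the uncontrolled geometry of $V_1(t)$. Your device of extending $f$ by zero to the fixed domain $\O$ --- legitimate because $\rho$ vanishes on $\p U(t)$, so $f$ is supported compactly inside $U(t)$ --- makes the constant manifestly time-independent and is the cleaner way to run this interpolation. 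The exponent bookkeeping ($p\theta=2$ for $d\ge3$; $p=2\alpha/\gamma\in(2,4)$ plus H\"older in time for $d=2$, using $\kappa<2$) and the final comparison $\rho^\alpha\le(\rho^{\gamma/2})^{p}=(f+1)^p\le C(f^p+1)$ on $\{\rho>1\}$, valid since $2\alpha/\gamma\le 2\kappa$, all check out.
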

\begin{proof}
Setting $V_1(t):=V_0(t)\backslash \mathcal{C}(t)$, where $\mathcal{C}(t):=\big\{x\in V_0(t)|~0\leq\rho(x,t)\leq 1\big\}$.  
%If $V_1(t)=\emptyset$, then
%	\begin{equation*}
%	\begin{aligned}
%		\int_{0}^{t}\int_{U(s)}{\big(2\mu(\rho)+d\lambda(\rho)\big) dxds}=C\int_{0}^{t}\int_{ \mathcal{C}(s)}{\rho^{\alpha} dxds}\leq Ct,
%	\end{aligned}
%	\end{equation*}
It follows from \eqref{lem} that
	\begin{equation*}
	\begin{aligned}
	\int_{0}^{t}\int_{V_1(s)}\mu '(\rho) \rho^{\gamma-2}|\nabla \rho|^2 dxds \leq E_1.
	\end{aligned}
	\end{equation*}
	Since $\mu'(\rho)=a\alpha\rho^{\alpha-1}\geq a\alpha>0$ in $V_1(t)$, we can get that 
	\begin{equation}\label{lem2}
	\begin{aligned}
	\int_{0}^{t}\int_{V_1(s)}\rho^{\gamma-2}|\nabla \rho|^2 dxds \leq E_1.
	\end{aligned}
	\end{equation}
	In next proof, we simply mark $H^k(V_1(t))$ and $L^s(V_1(t))$ as $H^k$ and $L^s$ for $k\in \mathbb{Z}$ and $p\in \mathbb{R}^+$.
From \eqref{lem2.3 1} and \eqref{lem2}, we obtain that
	$$
	\rho \in L^{\infty}((0,t);L^{1}\cap L^{\gamma}),\;
	\nabla \rho^{\gamma / 2} \in L^{2}((0,t);L^{2}),
	$$ 
	then we get $ \rho^{\gamma / 2} \in L^{2}((0,t);H^{1}) $ and the following estimate:
\begin{equation}\label{sob}
	\int_{0}^{t}\|\rho^{\gamma/2}\|_{H^1}^2dt\leq C(t+1).
\end{equation}

	When $d=2$, it follows from Sobolev inequality that $\rho^{\gamma / 2} \in L^{2}((0,t);L^{q})$, for all $q \in [2,\infty)$. Thus $\rho^{\gamma} \in L^{1}((0,t);L^{p}) \cap L^{\infty}((0,t);L^{1})$, for all $p \in [1,\infty)$, and 
	$$\int_{0}^{t}\|\rho^{\gamma}\|_{L^p}ds\leq C(t+1),\;\sup_{0\leq s\leq t}\|\rho^{\gamma}\|_{L^1}\leq C.$$
	The interpolation inequality implies that $\rho^{\gamma} \in L^{r}((0,t) \times V_1(t))$, for all $r \in [1,2)$ with the following estimate:
	\begin{equation}\label{d2}
    \int_{0}^{t}\|\rho^{\gamma}\|_{L^r}^rds\leq C(t+1).
	\end{equation}
Notice that $0\leq\rho\leq1$ in $U(t) \backslash V_1(t)$, it follows from \eqref{d2} that
	$$
	\begin{aligned}
	\int_{0}^{t} \int_{U(s)}{\big(2\mu(\rho)+d\lambda(\rho)\big)dxds}&=2(a+b)\int_{0}^{t} \int_{U(s)}\rho^\alpha dxds\\
	&=2(a+b)\int_{0}^{t} \int_{V_1(s)}{\rho^\alpha dxds}+2(a+b)\int_{0}^{t} \int_{U(s)\backslash V_1(s)}{\rho^\alpha dxds}\\
	 &\leq C \int_{0}^{t}\int_{V_1(s)}{\rho ^{r \gamma}dxds}+Ct
	\leq C(t+1).
	\end{aligned}
	$$
	
	When $d\geq3$, Sobolev inequality and \eqref{sob} lead to $\rho^{\gamma / 2} \in L^{2}((0,t);L^{\frac{2d}{d-2}})$, which together with $\rho^{\gamma} \in L^{\infty}((0,t);L^{1})$ and the interpolation inequality gives $\rho^{\gamma} \in L^{\frac{d+2}{d}}((0,t) \times V_1(t))$, and the following estimate:
\begin{equation}\label{d3}
	\int_{0}^{t}\|\rho^{\gamma}\|_{L^{\frac{d+2}{d}}}^{\frac{d+2}{d}}ds\leq C(t+1).
\end{equation}
Notice that $0\leq\rho\leq1$ in $U(t) \backslash V_1(t)$, inequality \eqref{d3} above gives
	$$\begin{aligned}
	\int_{0}^{t} \int_{U(s)}{\big(2\mu(\rho)+d\lambda(\rho)\big)dxds}=&(2a+db)\int_{0}^{t} \int_{U(s)}\rho^\alpha dxds\\
	=&(2a+db)\int_{0}^{t} \int_{V_1(s)}{\rho^\alpha dxds}+(2a+db)\int_{0}^{t} \int_{U(s)\backslash V_1(s)}{\rho^\alpha dxds}\\
	\leq& C \int_{0}^{t}\int_{V_1(s)}{\rho ^{\frac{d+2}{d} \gamma}dxds}+Ct
	\leq C(t+1).
	\end{aligned}
	$$
Hence, we complete the proof of \eqref{nxxs2}.
\end{proof}

The following lemma is to estimate the key term in the proof of the results.
\begin{lemma}\label{kk}
	Suppose $(\rho(x,t),\u(x,t))$ is a classical solution to System \eqref{eq1}--\eqref{initial data} in $\O \times [0,T)$ with a suitable boundary condition on $\p\O$.
	If the viscosities $\mu(\rho)$ and $\lambda(\rho)$ satisfy the conditions in Theorem \ref{them1}, then for each $0<t<T$, it holds that
	\begin{equation}\label{lem2.5 1}
	\begin{aligned}
	\bigg{|}\int_{0}^{t}\int_{U(s)}{\big(2\mu(\rho)+d\lambda(\rho)\big)\di \u dxds}\bigg{|} \leq C(t+1)^{\frac{1}{2}},
	\end{aligned}
	\end{equation}
	where $C$ is a positive constant independent of $t$.
\end{lemma}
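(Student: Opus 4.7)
The plan is to apply the Cauchy--Schwarz inequality to the factorization $(2\mu+d\lambda)\di\u = \sqrt{2\mu+d\lambda}\cdot\sqrt{2\mu+d\lambda}\,\di\u$, which is legitimate because $2\mu(\rho)+d\lambda(\rho)\ge 0$ by the physical restriction \eqref{viscostiy0}. This reduces the target bound to controlling two space--time integrals separately: the integral of $2\mu+d\lambda$ itself, and the weighted dissipation $\int_{0}^{t}\!\int_{U(s)}(2\mu+d\lambda)|\di\u|^{2}\,dx\,ds$. Both have essentially been prepared by the earlier lemmas.

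For the weighted dissipation, the key is the algebraic identity
\[
2\mu|D\u|^{2}+\lambda|\di\u|^{2}\;=\;2\mu\Big|D\u-\tfrac{1}{d}\di\u\, I\Big|^{2}+\frac{2\mu+d\lambda}{d}|\di\u|^{2},
\]
which comes from the orthogonal split $|D\u|^{2}=|D\u-\frac{1}{d}\di\u\,I|^{2}+\frac{1}{d}|\di\u|^{2}$. Since $\mu\ge 0$ and $2\mu+d\lambda\ge 0$, both summands on the right are nonnegative, so the energy identity \eqref{lem2.3 2} of Lemma \ref{lemk} yields at once
\[
\int_{0}^{t}\!\int_{U(s)}(2\mu+d\lambda)|\di\u|^{2}\,dx\,ds\;\le\;dE_{0}.
\]

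For the other factor $\int_{0}^{t}\!\int_{U(s)}(2\mu+d\lambda)\,dx\,ds$ I would split by cases. Under Condition (i) the integrand $2\mu+d\lambda$ vanishes identically, so the full expression in \eqref{lem2.5 1} is zero and there is nothing to prove. Under Condition (ii), $\alpha,\beta\in(0,\gamma]$ gives $\rho^{\alpha},\rho^{\beta}\lesssim 1+\rho^{\gamma}$; mass conservation \eqref{lem2.3 1} together with the energy bound \eqref{lem2.3 2} controls $\int_{U(s)}(1+\rho^{\gamma})\,dx$ uniformly in $s$, and since $U(s)\subset\O$ is bounded, one obtains $\int_{0}^{t}\!\int_{U(s)}(2\mu+d\lambda)\,dx\,ds\le Ct$. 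Under Condition (iii) the bound $\int_{0}^{t}\!\int_{U(s)}(2\mu+d\lambda)\,dx\,ds\le C(t+1)$ is precisely Lemma \ref{sec}. Inserting these estimates into Cauchy--Schwarz delivers the claimed $C(t+1)^{1/2}$ bound.

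I do not expect a real obstacle in Lemma \ref{kk} itself: once the positivity structure is respected, Cauchy--Schwarz plus the decomposition of the dissipation does the job, and all substantive analytic work sits in the precursor Lemma \ref{sec}. The only point that requires a bit of care is to use the decomposition above rather than trying to dominate $\lambda|\di\u|^{2}$ by $2\mu|D\u|^{2}$ term by term, which would break down whenever $\lambda(\rho)<0$.
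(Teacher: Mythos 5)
Your proof is correct and follows essentially the same route as the paper: Cauchy--Schwarz on the factorization $\sqrt{2\mu+d\lambda}\cdot\sqrt{2\mu+d\lambda}\,\di\u$, the energy identity \eqref{lem2.3 2} for the weighted dissipation factor, and the same three-case treatment of $\int_0^t\int_{U(s)}(2\mu+d\lambda)\,dx\,ds$ with Condition (iii) delegated to Lemma \ref{sec}. Your explicit orthogonal decomposition $2\mu|D\u|^2+\lambda|\di\u|^2=2\mu|D\u-\tfrac1d\di\u\,I|^2+\tfrac{2\mu+d\lambda}{d}|\di\u|^2$ is a welcome justification of the step the paper leaves implicit, namely why $\int_0^t\int_{U(s)}(2\mu+d\lambda)|\di\u|^2\,dx\,ds\le dE_0$ follows from the energy bound.
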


\begin{proof}
	It follows from H{\"o}lder's inequality and \eqref{lem2.3 2} that
	\begin{equation*}\label{lem2.5 11}
	\begin{aligned}
	&\bigg{|}\int_{0}^{t}\int_{U(s)}{\big(2\mu(\rho)+d\lambda(\rho)\big)\di \u dxds}\bigg{|} \\
	\leq &\int_{0}^{t}\int_{U(s)}{\big(2\mu(\rho)+d\lambda(\rho)\big) |\di \u|dxds}\\
	\leq &C\Big(\int_{0}^{t}\int_{U(s)}{\big(2\mu(\rho)+d\lambda(\rho)\big) |\di \u|^{2}dxds}\Big)^{1/2}\Big(\int_{0}^{t}\int_{U(s)}{\big(2\mu(\rho)+d\lambda(\rho)\big)dxds}\Big)^{1/2}\\
	\leq & C E_{0}^{1/2} \Big(\int_{0}^{t}\int_{U(s)}{\big(2\mu (\rho)+d \lambda (\rho)\big)dxds}\Big)^{1/2}.
	\end{aligned}
	\end{equation*}

If $\mu(\rho)$ and $\lambda(\rho)$ satisfy Condition (i), the result is obviously true.

If $\mu(\rho)$ and $\lambda(\rho)$ satisfy Condition (ii), we obtain that
	\begin{align*}\label{abc}
	\bigg{|}\int_{0}^{t}\int_{U(s)}{\big(2\mu(\rho)+d\lambda(\rho)\big)\di \u dxds}\bigg{|} 
	\leq  Ct^{1/2},
	\end{align*}
	due to
	$$
	\begin{aligned}
	\int_{U(t)}{\big(2\mu(\rho)+d\lambda(\rho)\big)dx} \leq& C \int_{U(t)}{(1+\rho ^{\gamma})dx}
	\leq C(1+E_{0}).
	\end{aligned}
	$$
	
	If $\mu(\rho)$ and $\lambda(\rho)$ satisfy Condition (iii),  from \eqref{nxxs2}, we have
	$$\begin{aligned}
	\bigg{|}\int_{0}^{t}\int_{U(s)}{\big(2\mu(\rho)+d\lambda(\rho)\big)\di \u dxds}\bigg{|} 
	\leq  C(t+1)^{1/2}.
	\end{aligned}
	$$
Hence, we finish the proof.
\end{proof}

With the estimates above at hand, we can start proving Theorem \ref{them1}.\\
\textbf{Proof of Theorem \ref{them1}.}~
	Set
	$$
	\begin{aligned}
	G(t):=\int_{U(t)}{|x|^{2}\rho(x,t)dx}.
	\end{aligned}
	$$
	From (\ref{lem2.1 k}), we get
	$$
	\begin{aligned}
	G'(t)=\frac{d}{dt}\int_{U(t)}{|x|^{2}\rho(x,t)dx}=\int_{U(t)}{\rho \u \cdot \nabla |x|^{2}dx}=2\int_{U(t)}{\rho \u \cdot x dx},
	\end{aligned}
	$$
	and
	$$
	\begin{aligned}
	G''(t)=& 2\frac{d}{dt}\int_{U(t)}{\rho \u \cdot x dx}=2\int_{U(t)}{\rho [(\u \cdot x)_{t}+\u \cdot \nabla (\u \cdot x)]dx}\\
	=& 2\int_{U(t)}{\rho (\u_{t}+\u \cdot \nabla \u)\cdot x dx}+2\int_{U(t)}{\rho \u \cdot(\u \cdot \nabla x)dx}\\
	=& 2\int_{U(t)}{\big(\di (2\mu(\rho)D \u)+\nabla (\lambda(\rho)\di \u)-\nabla P\big)\cdot x dx}+2\int_{U(t)}{\rho |\u|^{2}dx}.
	\end{aligned}
	$$
Notice that $\rho=0$ on $\p U(t),$ integrating by parts leads to
	$$
	\begin{aligned}
	\int_{U(t)}{\nabla P \cdot x dx}=-\int_{U(t)}{P \di x dx}=-d\int_{U(t)}{Pdx},
	\end{aligned}
	$$
	$$
	\begin{aligned}
	\int_{U(t)}{\big(\di (2\mu(\rho)D \u)+\nabla (\lambda(\rho)\di \u)\big)\cdot x dx}=-\int_{U(t)}{\big(2\mu(\rho)+d\lambda(\rho)\big)\di \u dx},
	\end{aligned}
	$$
	so we conclude that
	\begin{equation}\label{them1 1}
	\begin{aligned}
	G''(t)=&2\int_{U(t)}{\rho |\u|^{2}dx}+2d \int_{U(t)}{P dx}-2 \int_{U(t)}{(2\mu(\rho)+d \lambda(\rho))\di \u dx}.
	%:=&\sum_{i=1}^{3}{J_{i}}.
	\end{aligned}
	\end{equation}
  Equality \eqref{lem2.3 1} and Jensen inequality imply
\begin{equation}\label{them1 2}	
	\begin{aligned}
   \int_{U(t)}{Pdx}=\int_{U(t)}{\rho^{\gamma}dx} 
	\geq |U(t)|^{1-\gamma}\bigg{(}\int_{U(t)}{\rho dx}\bigg{)}^{\gamma}
	=|U(t)|^{1-\gamma}M_{0}^{\gamma}.
	\end{aligned}
\end{equation}	
From Lemma \ref{kk}, we have that
\begin{equation}\label{them1 3}	
	\begin{aligned}
	\bigg{|}\int_{0}^{t}{\int_{U(s)}{(2\mu(\rho)+d \lambda(\rho))\di \u dx}ds}\bigg{|}\leq C(t+1)^{1/2}.
	\end{aligned}
\end{equation}
	Then, it follows from \eqref{them1 1},\eqref{them1 2} and \eqref{them1 3} immediately that 
	$$
	\begin{aligned}
	G'(t)\geq G_{1}+2d |U(t)|^{1-\gamma}M_{0}^{\gamma}t-C(t+1)^{1/2}.
	\end{aligned}
	$$
	Therefore, for each $0<t<T$, we have
	$$
	\begin{aligned}
	G(t)\geq G_{0}+G_{1}t+C M_{0}^{\gamma}t^{2}-C(t+1)^{3/2}.
	\end{aligned}
	$$
	On the other hand, by virtue of the mass conservation equation and the fact that $U(t)$ is always in a bounded domain $\O\subset B_R(0)$ for some $R>0$, we obtain that
	$$
	\begin{aligned}
	G(t)=\int_{U(t)}{|x|^{2}\rho(x,t)dx}\leq R^{2}\int_{U(t)}{\rho(x,t)dx}=M_{0}R^{2}.
	\end{aligned}
	$$
 This leads to a finite bound on the life span of the classical solutions. Furthermore, the maximal time of existence satisfies $T\leq T^*$, where $T^*$ is the positive root of the equation $$CM_{0}^{\gamma}t^{2}-C(t+1)^{3/2}+G_{1}t+G_{0}-M_{0}R^{2}=0.$$ Therefore, the proof of Theorem \ref{them1} is completed.

\section*{Acknowledgements}
  Rongfeng Yu is partially supported by the Fundamental Research Funds for the Central Universities of China (Grant No.19lgpy237), and Natural Science Foundation of Guangdong Province, China (Grant No.2020B1515310004), Zheng-an Yao is partially supported by National Natural Science Foundation of China (Grant No.11971496).

%\section*{References}
%\bibliographystyle{abbrv} %plain ,%alpha, %abbrv
%\bibliography{NS_blowup}

\phantomsection
\addcontentsline{toc}{section}{\refname}

\end{document}